\documentclass[11pt]{article}%
\usepackage[letterpaper, portrait, margin=1in, headheight=0pt]{geometry}
\usepackage{mathtools}
\usepackage{enumerate}
\usepackage{amsmath,amsthm,amssymb,amsfonts}
\usepackage{enumitem}
\usepackage{mathrsfs}
\usepackage{tikz-cd}
\usepackage{multicol}
\setlength{\columnsep}{1.0cm}
\usepackage[title]{appendix}
\usetikzlibrary{quotes,angles}

\newcommand{\F}{\mathbb{F}}

\newcommand{\inv}{^{-1}}

\newcommand{\cH}{\mathcal{H}}

\DeclareMathOperator{\spa}{span}

\newtheorem{thm}{Theorem}[subsection]
\newtheorem{lem}[thm]{Lemma}

\theoremstyle{definition}

\theoremstyle{remark}

\title{Associative Algebras with Small Derived Ideal}
\author{Erik Mainellis}
\date{}

\begin{document}

\maketitle

\begin{abstract}
    The paper concerns extra special associative algebras, an analogue of the Heisenberg Lie algebra. In particular, we say that an associative algebra is extra special if its center is equal to its derived ideal and the center is 1-dimensional. In this paper, we classify extra special associative algebras by proving that their structure is equivalent to that of extra special Leibniz algebras. We then characterize their (Schur) multipliers via dimension and completely determine their capability. We connect this with the related notion of unicentral algebras and discuss the problem of classifying extra special diassociative algebras.
\end{abstract}

\section{Introduction}
Let $\F$ be an algebraically closed field of characteristic different from 2. Throughout, we assume that all algebras are finite-dimensional over such a base field.

Recall that a Lie algebra $L$ is called \textit{extra special (or Heisenberg)} if $Z(L) = L'$ and $\dim Z(L) = 1$, where $Z(L)$ denotes the center of $L$ and $L'$ denotes the derived ideal. Extra special Lie algebras have been fully classified; they are exclusively of odd dimension and are characterized by a canonical basis of the form $\{x_1,\dots,x_n,y_1,\dots,y_n,z\}$ with only nonzero products $[x_i,y_i] = -[y_i,x_i] = z$ for $1\leq i\leq n$. The \textit{Schur multipliers} of these algebras have also been studied. In \cite{batten}, it is shown that the dimension of the multiplier $M(L)$ is $2n^2-n-1$ for any extra special Lie algebra $L$ of dimension $2n+1$.

These notions have been generalized to the context of Leibniz algebras. A Leibniz algebra $L$ is \textit{extra special} if $Z(L)=L'$ and $\dim Z(L) =1$, where the center and derived ideal are generalized in the usual manner from Lie algebras. Extra special Leibniz algebras have been fully classified via congruence classes of bilinear forms (see \cite{edal}). In particular, any extra special Leibniz algebra is a central sum of the following five classes of extra special Leibniz algebras:
\begin{enumerate}
    \item[i.] $J_1$ with basis $\{x,z\}$ and nonzero product $xx=z$;
    
    \item[ii.] $J_n$ for $n=2,3,\dots$, with basis $\{x_1,\dots,x_n,z\}$ and nonzero products \begin{align*}
        x_1x_2 = z, ~~~ x_2x_3 = z, ~~~ \cdots ~~~ x_{n-1}x_n = z;
    \end{align*}
    
    \item[iii.] $\Gamma_n$ for $n=2,3,\dots$, with basis $\{x_1,\dots,x_n,z\}$ and nonzero products \begin{align*}
        & x_nx_1 = z, ~~~ x_{n-1}x_2 = -z, ~~~ \cdots ~~~ x_ix_{n-i+1} = (-1)^{n-i+2}z, ~~~\cdots ~~~x_2x_{n-1} = (-1)^nz, \\ & x_nx_2 = z, ~~~ x_{n-1}x_3 = -z, ~~~ \cdots ~~~ x_ix_{n-i+2} = (-1)^{n-i+2}z, ~~~\cdots ~~~x_2x_{n} = (-1)^nz, \\ & x_1x_n = (-1)^{n+1}z;
    \end{align*}

    \item[iv.] $H_2(\lambda)$ with basis $\{x_1,x_2,z\}$ and nonzero products $x_1x_2 = z$, $x_2x_1 = \lambda z$ for $0\neq \lambda \neq 1$;

    \item[v.] $H_{2n}(\lambda)$ for $n=2,3,\dots$, with basis $\{x_1,\dots,x_{2n},z\}$ and nonzero products \begin{align*}
        & x_1x_{n+1} = z, ~~~ x_2x_{n+2}=z, ~~~ \cdots ~~~ x_nx_{2n} = z, \\ & x_{n+1}x_1 = \lambda z, ~~~ x_{n+2}x_2=\lambda z, ~~~ \cdots ~~~ x_{2n}x_n = \lambda z, \\ & x_{n+1}x_2 = z, ~~~ x_{n+2}x_3 = z, ~~~ \cdots ~~~ x_{2n-1}x_n = z,
    \end{align*} where $0\neq \lambda \neq (-1)^{n+1}$.
\end{enumerate} Here, $\lambda\in \F$ is determined up to replacement by $\lambda\inv$. Schur multipliers of Leibniz algebras have been studied in \cite{edal}, \cite{mainellis batten}, and \cite{rogers}. In \cite{edal}, the dimension of $M(L)$ is shown to be $(\dim(L)-1)^2-1$, for any extra special Leibniz algebra $L$, with the following exceptions: $\dim M(J_1) = 1$, $\dim M(J_2) = 4$, and $\dim M(H_2(-1)) = 5$. The same work determines the capability of extra special Leibniz algebras. A Leibniz algebra $L$ is said to be \textit{capable} if $L\cong K/Z(K)$ for some Leibniz algebra $K$. In particular, an extra special Leibniz algebra is capable if and only if it is isomorphic to one of $J_1$, $J_2$, or $H_2(-1)$; it is unicentral otherwise (see \cite{mainellis batten} for more on unicentral Leibniz algebras).

There has been great success in characterizing nilpotent Lie and Leibniz algebras by invariants related to the dimension of their multipliers (see \cite{arab, batten mult, edal mult new, edal mult, hardy, hardy stitz, shamsaki}). The author of the present paper plans to employ a similar methodology for characterizing associative algebras; the work herein is necessary for this effort. We first show that the structure of an extra special Leibniz algebra is the same as that of an extra special associative algebra and, therefore, that their classifications are the same. We then compute the dimension of the Schur multiplier for any extra special associative algebra and thereby obtain a characterization of these multipliers. We determine exactly when extra special associative algebras are capable and when they are unicentral. The paper concludes with a discussion of extra special diassociative algebras, which generalize associative algebras.

\section{Preliminaries}
Let $A$ be an associative algebra. We say that a subspace $S$ of $A$ is a \textit{subalgebra} of $A$ if $ab\in S$ for any $a,b\in S$. A subalgebra $I$ of $A$ is called an \textit{ideal} of $A$ if $ia\in I$ and $ai\in I$ for all $a\in A$ and $i\in I$. We denote by $A'=AA$ the \textit{derived ideal} of $A$, the ideal generated by all products in $A$, and denote by $Z(A)$ the \textit{center} of $A$, the ideal consisting of all $z\in A$ such that $za = az = 0$ for all $a\in A$. We say that an associative algebra $A$ is \textit{extra special} if $Z(A) = A'$ and $\dim(Z(A)) = 1$. An associative algebra $A$ is a \textit{central sum} of subalgebras $A_1,\dots,A_n$ if \[A = \sum_{i=1}^nA_i\] and there is an ideal $Z$ in $A$ such that $A_i\cap A_j = Z$ and $A_iA_j = 0$ for all $i\neq j$. An \textit{extension} of an associative algebra $A$ is a short exact sequence \[0\xrightarrow{} \ker \omega\xrightarrow{} K\xrightarrow{\omega} A\xrightarrow{} 0\] of homomorphisms such that $K$ is an associative algebra. We say that such an extension is \textit{central} if $\ker \omega\subseteq Z(K)$ and \textit{stem} if $\ker \omega\subseteq Z(K)\cap K'$. A \textit{section} of the extension is a linear map $\mu:A\xrightarrow{} K$ such that $\omega\circ\mu = \text{id}_A$.

A pair of associative algebras $(K,M)$ is called a \textit{defining pair} for $A$ if $A\cong K/M$ and $M\subseteq Z(K)\cap K'$. In other words, a defining pair describes a stem extension \[0\xrightarrow{} M\xrightarrow{} K\xrightarrow{\omega} A\xrightarrow{} 0\] where $M = \ker \omega$. Such a pair is a \textit{maximal defining pair} if the dimension of $K$ is maximal. In this case, $K$ is called a \textit{cover} of $A$ and $M$ is called the \textit{multiplier} of $A$, denoted by $M(A)$. In \cite{mainellis batten di}, it is shown that $M(A)$ is isomorphic to $\cH^2(A,\F)$, the second cohomology group with coefficients in the base field $\F$. It is also shown that covers are unique. These results are special cases of ones concerning diassociative algebras.

Finally, let $Z^*(A)$ denote the intersection of all images $\omega(Z(K))$ such that \[0\xrightarrow{} \ker \omega \xrightarrow{} K\xrightarrow{\omega} A\xrightarrow{} 0\] is a central extension of $A$. Clearly $Z^*(A)\subseteq Z(A)$. Following the definition in \cite{mainellis batten di}, however, we say that $A$ is \textit{unicentral} if $Z(A) = Z^*(A)$. A related notion is that of capability; an associative algebra $A$ is \textit{capable} if $A\cong K/Z(K)$ for some associative algebra $K$.

\section{Results}
\subsection{Classification}
\begin{lem}
    Any algebra is an extra special Leibniz algebra if and only if it is an extra special associative algebra under the same multiplication structure.
\end{lem}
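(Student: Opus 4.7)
The plan is to observe that in the paper's setup, ``extra special'' in either category forces all products of three or more factors to vanish, at which point the associative and Leibniz identities both hold trivially and the two notions become the same.

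First I would unpack the hypothesis. Suppose $L$ is extra special in either sense. Then $L'=Z(L)$, and by the paper's convention $Z(L)$ consists of all $z\in L$ with $za=az=0$ for every $a\in L$. Therefore, for any $a,b,c\in L$, the product $ab$ lies in $L'=Z(L)$, which gives $(ab)c=c(ab)=0$; symmetrically $bc\in Z(L)$ gives $a(bc)=(bc)a=0$. Hence every iterated product of length at least three in $L$ is zero.

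Next I would use this vanishing to deduce both identities. Associativity $(ab)c=a(bc)$ holds because both sides are $0$, and the (left or right) Leibniz identity likewise holds because each of its three triple-product terms is $0$. Thus the same multiplication that makes $L$ into an extra special Leibniz algebra automatically makes it an associative algebra, and conversely any extra special associative algebra satisfies the Leibniz identity.

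Finally I would note that the derived ideal $L'=LL$ and the center $Z(L)$ are defined using only the binary product, with identical wording in the associative and Leibniz settings. Consequently, the conditions $L'=Z(L)$ and $\dim Z(L)=1$ are preserved when we change the ambient category. The only point that needs care is that the paper's ``center'' is the annihilator-style center rather than the usual commutator center, but this is exactly what lets triple products die; once that is spelled out, no further work is needed and there is no substantive obstacle.
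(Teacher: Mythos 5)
Your proof is correct and follows essentially the same route as the paper: both arguments hinge on the observation that every product lands in the annihilator-style center, so all triple products vanish and the associative and Leibniz identities hold trivially. If anything, your explicit remark that the center and derived ideal are defined by identical formulas in both categories is slightly cleaner than the paper's appeal to a canonical basis and the nonvanishing of products involving each basis element.
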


\begin{proof}
    Assume that $A$ is an extra special Leibniz algebra with basis $\{x_1,\dots,x_n,z\}$, multiplications $x_ix_j = \alpha_{ij}z$, $\alpha_{ij}\in\F$, and $Z(A) = A' = \spa\{z\}$. We first note that any 3-product in $A$ is zero, and so the Leibniz multiplication structure on $A$ is trivially associative. Moreover, since $A$ is an extra special Leibniz algebra, at least one product involving each $x_i$ is nonzero. Therefore $A$ is an extra special associative algebra under $x_ix_j = \alpha_{ij}z$. The reverse direction follows similarly.
\end{proof}

By virtue of this lemma, the characterization of extra special associative algebras is the same as that of the Leibniz case. The following theorem mimics the classification of the latter, as obtained in \cite{edal}.

\begin{thm}\label{classification}
Any extra special associative algebra is a (unique) central sum of associative algebras with the forms $J_1$, $J_n$, $\Gamma_n$, $H_2(\lambda)$, and $H_{2n}(\lambda)$, for $n=2,3,\dots$, where $\lambda$ is determined up to replacement by $\lambda\inv$.
\end{thm}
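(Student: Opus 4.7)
The plan is to deduce the theorem directly from the preceding lemma together with the classification of extra special Leibniz algebras from \cite{edal}. The lemma establishes that the class of extra special associative algebras coincides, as a collection of algebras with multiplication, with the class of extra special Leibniz algebras. Consequently, any structural decomposition valid on one side should transfer to the other, provided the auxiliary notions (subalgebra, ideal, central sum) agree in the two settings.

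First, I would observe that each of the algebras $J_1$, $J_n$, $\Gamma_n$, $H_2(\lambda)$, and $H_{2n}(\lambda)$ listed in the introduction is, by the preceding lemma, itself an extra special associative algebra under its given multiplication (all relevant 3-products vanish, so associativity is automatic). Second, given an arbitrary extra special associative algebra $A$, the lemma allows me to regard $A$ as an extra special Leibniz algebra under the same multiplication, and the classification in \cite{edal} then produces a decomposition $A = A_1 + \cdots + A_k$ as a (Leibniz) central sum with each $A_i$ of one of the listed forms.

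The third step is to confirm that this Leibniz central sum is also an associative central sum in the sense defined in the Preliminaries. The key observation is that the notions of subalgebra, ideal, the containment $A_i \cap A_j = Z$, and the condition $A_iA_j = 0$ depend only on the bilinear multiplication, not on the variety in which one places the algebra. Hence each $A_i$ is an associative subalgebra of $A$, the distinguished subspace $Z$ is an associative ideal, and every defining condition of a central sum transfers verbatim. Uniqueness of the decomposition then carries over from the uniqueness part of the Leibniz classification via the same correspondence.

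The main subtlety, and the only point that requires any verification, is that the bijection supplied by the lemma respects these auxiliary structures; once one notes that subalgebras, ideals, and central sums are each defined purely in terms of the multiplication, there is nothing further to compute. The theorem is thus essentially a translation of the Leibniz result, with the preceding lemma doing all of the substantive work.
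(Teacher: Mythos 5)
Your proposal is correct and follows exactly the route the paper takes: the paper states this theorem with no separate proof, justifying it only by the remark that the preceding lemma makes the associative classification identical to the Leibniz one from \cite{edal}. Your additional check that subalgebras, ideals, and central sums are defined purely in terms of the multiplication (and hence transfer verbatim) is a reasonable elaboration of the same argument, not a different approach.
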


\subsection{Multipliers}
We now characterize the multipliers of extra special associative algebras. While these algebras have the same structure as that of the Leibniz case, their multipliers are notably different.

\begin{thm}
    Let $A$ be an extra special associative algebra. Then $\dim M(A) = (\dim(A) - 1)^2-1$ with the exception of $A=J_1$. In particular, $\dim M(J_1) = 1$.
\end{thm}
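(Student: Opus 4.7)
The plan is to identify $M(A)$ with $\cH^2(A,\F)$ (as noted in the preliminaries) and to compute it as 2-cocycles modulo 2-coboundaries, where a 2-cocycle is a bilinear form $f:A\times A\to \F$ satisfying $f(ab,c)=f(a,bc)$ and a 2-coboundary has the form $f(a,b)=g(ab)$ for some linear $g:A\to\F$. Fix a basis $\{x_1,\dots,x_m,z\}$ of $A$ with $m=\dim(A)-1$ and $\spa\{z\}=Z(A)=A'$, and write $x_ix_j=\alpha_{ij}z$. Parametrize a bilinear form $f$ by $a_{ij}=f(x_i,x_j)$, $b_i=f(x_i,z)$, $c_i=f(z,x_i)$, and $d=f(z,z)$. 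Applying the cocycle condition to triples of basis vectors leaves only two nontrivial families of constraints: $\alpha_{ij}d=0$, which forces $d=0$ since $A'\neq 0$, and $\alpha_{ij}c_k=\alpha_{jk}b_i$ for all $i,j,k$. A coboundary depends only on $g(z)$ because every product lies in $\F z$, so $B^2$ is one-dimensional, spanned by the form with $a_{ij}=\alpha_{ij}$ and all other entries zero.

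The central claim is that the system $\alpha_{ij}c_k=\alpha_{jk}b_i$ forces $b=c=0$ whenever $A\not\cong J_1$; then $Z^2$ is parametrized freely by the $m^2$ scalars $a_{ij}$, so $\dim M(A)=m^2-1=(\dim A-1)^2-1$. To prove the claim, pass to $\bar A=A/A'$ with the induced bilinear form $\mu(\bar x_i,\bar x_j)=\alpha_{ij}$; the cocycle constraint reads $\mu(u,v)c(w)=\mu(v,w)b(u)$ for all $u,v,w\in\bar A$. If $b\neq 0$, pick $u_0$ with $b(u_0)\neq 0$; setting $u=u_0$ gives $\mu(v,w)=\mu(u_0,v)c(w)/b(u_0)$, a product of a function of $v$ alone and a function of $w$ alone, so $\mu$ has rank at most one. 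Writing $\alpha_{ij}=\phi_i\psi_j$, the extra special condition $Z(A)=\F z$ amounts to $\ker\phi\cap\ker\psi=0$ in $\bar A$, forcing $m\leq 2$: the case $m=1$ gives $A\cong J_1$, while $m=2$ forces $\phi,\psi$ linearly independent and $A\cong J_2$ after a change of basis, in which case a direct substitution into $\alpha_{ij}c_k=\alpha_{jk}b_i$ (only $\alpha_{12}=1$ is nonzero) yields $b=c=0$. Thus $b\neq 0$ is incompatible with $A\not\cong J_1$; a symmetric argument (fixing $w=w_0$ with $c(w_0)\neq 0$) rules out $c\neq 0$.

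Finally, for $A=J_1$ one has $m=1$ and $\alpha_{11}=1$, so the only remaining constraint is $c_1=b_1$, contributing one free parameter and giving $\dim Z^2=2$, whence $\dim M(J_1)=1$. The main obstacle is the rank-one reduction combined with the case analysis under the extra special hypothesis, especially verifying by direct computation that the $J_2$ subcase forces $b=c=0$, so that no cocycles beyond the $J_1$ exception arise; once this is in hand, everything else is a linear dimension count.
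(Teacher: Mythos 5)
Your proposal is correct, and it reaches the result by a genuinely different route than the paper. The paper works directly inside the cover $K$, writing $x_ix_j = \mu(x_ix_j)+a_{ij}$, $x_iz=b_i$, $zx_i=c_i$, and then kills the $b_i,c_i$ by a case-by-case analysis over the classification of Theorem \ref{classification}: central sums, $J_1$, $J_n$, $\Gamma_n$, $H_2(\lambda)$, and $H_{2n}(\lambda)$ are each handled by exhibiting explicit triples of basis vectors whose associators force $b_i=c_i=0$. Your computation is the cohomological dual of this (your $a_{ij},b_i,c_i,d$ are exactly the paper's structure constants in $\ker\omega$, and the cocycle identity $f(ab,c)=f(a,bc)$ is exactly associativity of $K$), but your key step is uniform rather than casewise: from $\mu(u,v)c(w)=\mu(v,w)b(u)$ you deduce that a nonzero $b$ or $c$ forces the induced form on $A/A'$ to have rank at most one, which under the extra special condition $\ker\phi\cap\ker\psi=0$ pins $\dim A\leq 3$ and leaves only $J_1$ and $J_2$ to check by hand. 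This buys two things: you never need the classification theorem (only the trivial observation that all products land in $\mathbb{F}z$ and $z$ annihilates $A$), and you avoid the bookkeeping over five infinite families plus central sums. What the paper's approach buys in exchange is that it never has to invoke the identification $M(A)\cong\mathcal{H}^2(A,\mathbb{F})$ as a black box, and its explicit covers are reusable data. I verified the points where your argument could have slipped: the coboundary space is indeed one-dimensional because $A'\neq 0$; the $m=2$ rank-one case is indeed congruent to $J_2$ since linear dependence of $\phi,\psi$ would enlarge the center; and the direct substitution for $J_2$ (triples $(1,1,2)$, $(2,1,2)$, $(1,2,1)$, $(1,2,2)$) does give $b=c=0$, consistent with the paper's $\dim M(J_2)=3$. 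The $J_1$ count ($\dim Z^2=2$, $\dim B^2=1$) also matches.
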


\begin{proof}
    Let $K$ be the cover of $A$ corresponding to stem extension \[0\xrightarrow{} \ker \omega \xrightarrow{}K\xrightarrow{\omega} A\xrightarrow{}0\] with linear section $\mu:A\xrightarrow{} K$ of $\omega$. Then $K$ is isomorphic to $\mu(A)\oplus \ker \omega$ as a vector space. For the sake of notation, we will identify $x\in A$ with $\mu(x)\in K$. By our classification of extra special algebras, we can choose a canonical basis $\{x_1,\dots,x_n,z\}$ for $A$ such that $Z(A)=A' = \spa\{z\}$. The maximum possible dimension of $K$ could only be attained with a basis of the form \[\{x_i, z, a_{ij}, b_i,c_i\}_{i,j=1,\dots,n}\] and multiplications $x_ix_j = \mu(x_ix_j) + a_{ij}$, $x_iz = b_i$, and $zx_i = c_i$, where $a_{ij},b_i,c_i\in \ker \omega$. We note that $zz=0$ in $K$. Furthermore, a change of basis allows for $a_{11}=0$. By Theorem \ref{classification}, we know that $A$ is either $J_1$, $J_n$, $\Gamma_n$, $H_2(\lambda)$, $H_{2n}(\lambda)$, or a central sum of these classes.

    Suppose $A$ is a central sum of two or more algebras. Then, for any $x_i$, where $1\leq i\leq n$, we can always choose $x_j$ and $x_k$ such that $x_jx_k = \alpha z$ and $x_ix_j=0$ as products in $A$. Here, $1\leq j,k\leq n$ and $0\neq \alpha\in \F$. We compute \begin{align*}
        b_i &= x_iz = x_i\left(\alpha\inv  \mu(x_jx_k)\right) = x_i\left(\alpha\inv  (x_jx_k - a_{jk})\right) \\ &= \alpha\inv  x_i(x_jx_k) = \alpha\inv  (x_ix_j)x_k = \alpha\inv a_{ij}x_k = 0
    \end{align*} for $1\leq i\leq n$. Similarly, for any $x_i$, $1\leq i\leq n$, we can choose $x_j$ and $x_k$ such that $x_jx_k=\alpha z$ and $x_kx_i = 0$ for $1\leq j,k\leq n$ and $0\neq \alpha\in \F$. One thereby computes $c_i = 0$. But this means that all 3-products in $K$ are trivial. Thus, there are no further relations to consider and we obtain $\dim K = (\dim A - 1)^2 + \dim A - 1$, which implies that $\dim M(A) = (\dim(A)-1)^2-1$. If $A$ is not a central sum of two or more algebras, then there are five cases.
    \begin{enumerate}
        \item[i.] Suppose $A = J_1$. Then $K$ is generated by $\{x_1,z,b_1,c_1\}$ and has multiplications $x_1x_1 = z$, $xz = b_1$, and $zx = c_1$. However, one computes $b_1 = x(xx) = (xx)x = c_1$, and so $M(J_1)$ is one-dimensional.
        \item[ii.] Suppose $A=J_n$ for $n\geq 3$. For $2\leq i\leq n$, one computes \begin{align*}
            b_i = x_i(x_{i-1}x_i) = (x_ix_{i-1})x_i = a_{i(i-1)}x_i = 0
        \end{align*} and \begin{align*}
            c_i = (x_{i-1}x_i)x_i = x_{i-1}(x_ix_i) = x_{i-1}a_{ii} = 0.
        \end{align*} We also compute \begin{align*}
            b_1 = x_1(x_2x_3) = (x_1x_2)x_3 = c_3 = 0
        \end{align*} and \begin{align*}
            c_1 = (x_1x_2)x_1 = x_1(x_2x_1) = x_1a_{21} = 0
        \end{align*} and thus $\dim M(J_n) = n^2 - 1 = (\dim(J_n)-1)^2 - 1$ for $n\geq 3$. Now suppose $A=J_2$. One computes $b_2 = c_2 = 0$ via the same logic of the $n\geq 3$ case. We compute \begin{align*}
            b_1 =x_1(x_1x_2) = (x_1x_1)x_2 = a_{11}x_2 = 0
        \end{align*} and \begin{align*}
            c_1 = (x_1x_2)x_1 = x_1(x_2x_1) = x_1a_{21} = 0
        \end{align*} which yields $\{a_{12},a_{21},a_{22}\}$ as a basis for the multiplier. Thus $\dim M(J_2) = 3 = 2^2 - 1$.

        \item[iii.] Suppose $A=\Gamma_n$ for $n\geq 2$.

        If $n$ is odd, then $x_ix_i = a_{ii}$ in $K$ for all $i\neq \frac{n+1}{2}$. For $i\neq \frac{n+1}{2}$, we compute \begin{align*}
            b_i = x_i(x_ix_{n-i+1}) = (x_ix_i)x_{n-i+1} = 0
        \end{align*} for $1\leq i\leq n$ and \begin{align*}
            c_{i} = (x_{n-i+2}x_i)x_{i} = x_{n-i+2}(x_ix_{i}) = 0
        \end{align*} for $2\leq i\leq n$. Here, we have reindexed $c_{n-i+2}$ by $c_i$ for $2\leq i\leq n$. Next, one computes $c_1 = (x_nx_1)x_1 = x_n(x_1x_1) = 0$. Finally, one has \begin{align*}
            b_{\frac{n+1}{2}} = x_{\frac{n+1}{2}}\left(x_{\frac{n-1}{2}}x_{\frac{n+3}{2}}\right) = \left(x_{\frac{n+1}{2}}x_{\frac{n-1}{2}}\right)x_{\frac{n+3}{2}} = (-1)^{\frac{n+3}{2}}zx_{\frac{n+3}{2}} = (-1)^{\frac{n+3}{2}}c_{\frac{n+3}{2}} = 0
        \end{align*} and \begin{align*}
            c_{\frac{n+1}{2}} = \left(x_{\frac{n-1}{2}}x_{\frac{n+3}{2}}\right)x_{\frac{n+1}{2}} = x_{\frac{n-1}{2}}\left(x_{\frac{n+3}{2}}x_{\frac{n+1}{2}}\right) = (-1)^{\frac{n+1}{2}}x_{\frac{n-1}{2}}z = (-1)^{\frac{n+1}{2}}b_{\frac{n-1}{2}} = 0.
        \end{align*}

        If $n$ is even, then $x_ix_i = a_{ii}$ in $K$ for all $i\neq \frac{n+2}{2}$. For $i\neq \frac{n+2}{2}$, we compute \begin{align*}
            b_i = x_i(x_ix_{n-i+1}) = (x_ix_i)x_{n-i+1} = 0
        \end{align*} for $1\leq i\leq n$ and \begin{align*}
            c_i = (x_{n-i+2}x_i)x_i = x_{n-i+2}(x_ix_i) = 0
        \end{align*} for $2\leq i\leq n$. Furthermore, one has $c_1 = (x_nx_1)x_1 = x_n(x_1x_1) = 0$, \begin{align*}
            b_{\frac{n+2}{2}} = x_{\frac{n+2}{2}}\left(x_{\frac{n}{2}}x_{\frac{n+2}{2}}\right) = \left(x_{\frac{n+2}{2}}x_{\frac{n}{2}}\right)x_{\frac{n+2}{2}} = (-1)^{\frac{n+2}{2}}zx_{\frac{n+2}{2}} = (-1)^{\frac{n+2}{2}}c_{\frac{n+2}{2}},
        \end{align*} and \begin{align*}
            c_{\frac{n+2}{2}} = \left(x_{\frac{n}{2}}x_{\frac{n+2}{2}}\right)x_{\frac{n+2}{2}} = x_{\frac{n}{2}}\left(x_{\frac{n+2}{2}}x_{\frac{n+2}{2}}\right) = (-1)^{\frac{n+2}{2}}x_{\frac{n}{2}}z = (-1)^{\frac{n+2}{2}}b_{\frac{n}{2}} = 0,
        \end{align*} which implies that $b_{\frac{n+2}{2}}=0$.
        
        Therefore, $\dim M(\Gamma_n) = n^2-1 = (\dim(\Gamma_n)-1)^2-1$ for $n\geq 2$.

        \item[iv.] Suppose $A=H_2(\lambda)$, where $0\neq \lambda\neq 1$. Then $c_1 = (x_1x_2)x_1 = x_1(x_2x_1) = \lambda b_1$ and also $c_1 = \lambda\inv(x_2x_1)x_1= \lambda\inv x_2(x_1x_1) = 0$. Hence $b_1 = c_1 = 0$. Next, one has $c_2 = (x_1x_2)x_2 = x_1(x_2x_2) = 0$ and $c_2 = \lambda\inv(x_2x_1)x_2 = \lambda\inv x_2(x_1x_2) = \lambda\inv b_2$, which implies that $b_2 = c_2 = 0$. Therefore, $M(H_2(\lambda))$ has basis $\{a_{12},a_{21},a_{22}\}$, and so the dimension of this multiplier is $(\dim(H_2(\lambda)) - 1)^2 - 1$.

        \item[v.] Suppose $A=H_{2n}(\lambda)$ for $n\geq 2$. Then, for $1\leq i\leq n$, one has \begin{align*}
            b_i = \lambda\inv x_i(x_{n+i}x_i) = \lambda\inv(x_ix_{n+i})x_i = \lambda\inv zx_i = \lambda\inv c_i
        \end{align*} and \begin{align*}
            c_i = \lambda\inv(x_{n+i}x_i)x_i = \lambda\inv x_{n+i}(x_ix_i) = 0
        \end{align*} which implies that $b_i = c_i = 0$ for $1\leq i\leq n$. For $n+1\leq i\leq 2n-1$, we compute \begin{align*}
            b_i = x_i(x_ix_{i-n+1}) = (x_ix_i)x_{i-n+1} = 0
        \end{align*} and \begin{align*}
            c_i = (x_ix_{i-n+1})x_i = x_i(x_{i-n+1}x_i) = x_ia_{(i-n+1)i} = 0.
        \end{align*} Finally, one has $b_{2n} = \lambda\inv x_{2n}(x_{2n}x_n) = \lambda\inv(x_{2n}x_{2n})x_n = 0$ and $c_{2n} = (x_nx_{2n})x_{2n} = x_n(x_{2n}x_{2n}) = 0$, which implies that $\dim M(H_{2n}(\lambda)) = (\dim(H_{2n}(\lambda)) - 1)^2 - 1$.
    \end{enumerate}
\end{proof}

\subsection{Capability}
Let $A$ be an associative algebra and $0\xrightarrow{} R\xrightarrow{} F\xrightarrow{} A\xrightarrow{} 0$ be a free presentation of $A$. In \cite{mainellis batten di}, it is shown that $\overline{\pi}(Z(\overline{F}))\subseteq \omega(Z(K))$ for every central extension \[0\xrightarrow{} \ker \omega \xrightarrow{} K\xrightarrow{\omega} A\xrightarrow{} 0,\] where $\overline{F} = \frac{F}{FR+RF}$, $\overline{R} = \frac{R}{FR+RF}$, and \[0\xrightarrow{} \overline{R}\xrightarrow{} \overline{F}\xrightarrow{\overline{\pi}} A\xrightarrow{} 0\] is the central extension induced by our free presentation. The same paper also shows that $Z^*(A) = \overline{\pi}(Z(\overline{F})) = \omega(Z(K))$ when our initial central extension is stem. With this setup, we are able to prove the following criterion for the capability of $A$.

\begin{thm}
    An associative algebra $A$ is capable if and only if $Z^*(A) = 0$.
\end{thm}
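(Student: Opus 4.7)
The plan is to leverage the identification $Z^*(A) = \overline{\pi}(Z(\overline{F}))$ from the setup preceding the theorem, where $0\to \overline{R}\to \overline{F}\xrightarrow{\overline{\pi}} A\to 0$ is the central extension induced by a free presentation of $A$. Two features of this extension will carry the whole argument: by centrality $\overline{R}\subseteq Z(\overline{F})$, and by construction $\ker\overline{\pi} = \overline{R}$. So this extension provides a convenient universal test object for both directions of the equivalence.

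For the forward direction, I would observe that capability of $A$ trivially forces $Z^*(A)$ to vanish. If $A\cong K/Z(K)$, then the natural projection $\omega:K\to A$ is itself a central extension with $\ker\omega = Z(K)$, so $\omega(Z(K)) = \omega(\ker\omega) = 0$. Since $Z^*(A)$ is defined as an intersection of the images $\omega(Z(K))$ over all central extensions of $A$, we obtain $Z^*(A) = 0$.

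For the reverse direction, I would assume $Z^*(A) = 0$ and feed this into the free-presentation extension. Then $\overline{\pi}(Z(\overline{F})) = Z^*(A) = 0$, so $Z(\overline{F}) \subseteq \ker\overline{\pi} = \overline{R}$. Combined with the opposite inclusion $\overline{R}\subseteq Z(\overline{F})$ coming from centrality, this yields $Z(\overline{F}) = \overline{R}$, and therefore $A \cong \overline{F}/\overline{R} = \overline{F}/Z(\overline{F})$. This exhibits $A$ as capable with witness $K = \overline{F}$.

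The main potential obstacle is making sure the identification $Z^*(A) = \overline{\pi}(Z(\overline{F}))$ really applies, as opposed to just the one-sided containment stated for arbitrary central extensions. This is handled by the discussion immediately preceding the theorem, which upgrades the inclusion to equality by passing through a stem extension (for instance, a cover) and invoking the result of \cite{mainellis batten di}. Once that identification is in hand, both directions reduce to a single line of set-theoretic manipulation using only the definitions of $Z^*(A)$ and of capability, together with the two defining properties of the free-presentation extension highlighted above.
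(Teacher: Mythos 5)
Your proposal is correct and follows essentially the same route as the paper's own proof: both directions hinge on the identification $Z^*(A)=\overline{\pi}(Z(\overline{F}))$ and on the free-presentation extension $\overline{F}/\overline{R}\cong A$ serving as the capability witness. The only cosmetic difference is that in the forward direction you invoke the definition of $Z^*(A)$ as an intersection directly, while the paper routes through the containment $\overline{\pi}(Z(\overline{F}))\subseteq\omega(Z(K))=0$; the substance is identical.
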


\begin{proof}
    If $A$ is capable, there exists a central extension \[0\xrightarrow{} Z(K)\xrightarrow{} K\xrightarrow{\omega} A\xrightarrow{} 0\] for some associative algebra $K$. By the aforementioned results, this means that $\overline{\pi}(Z(\overline{F}))\subseteq \omega(Z(K))$. But $\omega(Z(K)) = 0$ since $Z(K) = \ker \omega$. Thus $\overline{\pi}(Z(\overline{F})) = 0$, which implies that $Z^*(A)=0$. Conversely, assume that $Z^*(A) = 0$. Then $\overline{\pi}(Z(\overline{F}))=0$ implies that $Z(\overline{F})\subseteq \overline{R}$, which means that $Z(\overline{F})=\overline{R}$. We thus have a central extension \[0\xrightarrow{} Z(\overline{F})\xrightarrow{} \overline{F}\xrightarrow{\overline{\pi}} A\xrightarrow{} 0\] and so $A$ is capable.
\end{proof}

We are now ready to fully determine the capability of extra special associative algebras.

\begin{thm}
    Let $A$ be an extra special associative algebra. Then $A$ is capable if and only if $A\cong J_1$. Otherwise, $A$ is unicentral.
\end{thm}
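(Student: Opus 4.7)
The plan is to leverage the preceding theorem, which reduces capability to the condition $Z^*(A)=0$, together with the explicit cover computations performed in the multiplier proof. Since $A$ is extra special, $Z(A)=\spa\{z\}$ is one-dimensional, so the containment $Z^*(A)\subseteq Z(A)$ forces $Z^*(A)$ to be either $0$ or $Z(A)$. Moreover, by the cited result from \cite{mainellis batten di}, if $K$ is the cover of $A$ (a stem extension) with quotient map $\omega$, then $Z^*(A)=\omega(Z(K))$. So the whole question reduces to deciding whether the generator $z$ admits a central lift in $K$.

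For $A\not\cong J_1$ I would recycle the key byproduct of the proof of the multiplier theorem: in every case (including central sums), the calculation there showed $x_iz=b_i=0$ and $zx_i=c_i=0$ in $K$ for all $i$, and $zz=0$ was noted from the outset. Thus $z$ itself lies in $Z(K)$, so $z=\omega(z)\in\omega(Z(K))=Z^*(A)$, which forces $Z^*(A)=Z(A)$ and hence $A$ is unicentral. By the capability criterion this also shows $A$ is not capable.

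For $A=J_1$, the cover has basis $\{x_1,z,b_1\}$ with $b_1=c_1$ and $x_1z=zx_1=b_1\neq 0$, so $z\notin Z(K)$. Any other preimage of $z$ under $\omega$ has the form $z+\alpha b_1$ for $\alpha\in\F$, and since $b_1\in\ker\omega\subseteq Z(K)$, one computes $(z+\alpha b_1)x_1=zx_1=b_1\neq 0$, so no lift of $z$ is central. Therefore $\omega(Z(K))$ meets $\spa\{z\}$ trivially, i.e., $\omega(Z(K))=0$ and $Z^*(J_1)=0$. By the preceding theorem, $J_1$ is capable.

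The argument is essentially bookkeeping once the cover structure from the multiplier theorem is in hand; the only subtle point, and the one I would be most careful about, is the $J_1$ case, where I must rule out not just $z$ itself but every preimage of $z$ from being central in $K$. The computation above handles this cleanly because the kernel of $\omega$ sits inside $Z(K)$, so shifting $z$ by a kernel element does not change its commutators with $x_1$.
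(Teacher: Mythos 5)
Your proposal is correct and follows essentially the same route as the paper: both work with the cover $K$, use $Z^*(A)=\omega(Z(K))$ for the stem extension, and read off from the multiplier computations that $\mu(z)$ is central in $K$ exactly when $A\not\cong J_1$. Your treatment of the $J_1$ case (checking every preimage $z+\alpha b_1$) supplies a detail the paper leaves implicit in its claim that $Z(K)=M(A)$ there, but the argument is the same in substance.
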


\begin{proof}
    Let $K$ be the cover of $A$ corresponding to stem extension \[0\xrightarrow{} M(A)\xrightarrow{} K\xrightarrow{\omega}A\xrightarrow{} 0\] with linear section $\mu:A\xrightarrow{} K$. We first note that $Z(K) = M(A)$ if and only if $A=J_1$. Otherwise, $Z(K) = M(A) + \spa\{\mu(Z(A))\}$. In the case of $A=J_1$, we have $A=K/Z(K)$. In other words, $A$ is capable. When $A\neq J_1$, we have $Z^*(A) = \omega(Z(K)) = Z(A) \neq 0$ since $A$ is extra special. Thus, $A$ is unicentral but not capable.
\end{proof}

\section{Generalization}
Loday's \textit{associative dialgebras} \cite{loday}, or \textit{diassociative algebras}, are characterized by two bilinear multiplications, usually denoted by $\dashv$ and $\vdash$, that generalize associativity via the following five relations:
\begin{align*}
    (x\vdash y)\vdash z = x\vdash(y\vdash z) && (x\dashv y)\dashv z = x\dashv(y\dashv z) \\
    (x\dashv y)\vdash z = x\vdash (y\vdash z) & &(x\dashv y)\dashv z = x\dashv (y\vdash z) & \\
    (x\vdash y)\dashv z = x\vdash (y\dashv z)
\end{align*}
These algebras generalize the classic relation $[x,y] = xy - yx$ between associative and Lie algebras by defining a Leibniz algebra with multiplication $xy = x\dashv y - y\vdash x$. Furthermore, any associative algebra can be thought of as a diassociative algebra in which $x\dashv y = x\vdash y$. An \textit{extra special diassociative algebra} can be defined in an analogous way to that of the previous algebraic contexts; one notes that the structures of the center and derived algebra are significantly more complicated, but we should still require that they be 1-dimensional.

The problem of classifying extra special diassociative algebras is an open one. This is because the problem of obtaining canonical forms for the congruence classes of pairs of matrices also appears to be an open one. Explicitly, a pair of matrices $(M_1,M_2)$ is \textit{congruent} to another pair $(N_1,N_2)$ if there exists an invertible matrix $P$ such that $P^TM_iP = N_i$ for $i=1,2$. The classification of extra special diassociative algebras could be obtained via the methodology of the Leibniz and associative classifications (again, see \cite{edal}) if these congruence classes were known.

\end{document}